\newlength{\trianglewidth}
\newlength{\pluswidth}
\newtheorem{thm}{Theorem}
\newtheorem{lem}[thm]{Lemma}
\theoremstyle{remark}
\newtheorem{rem}{Remark}
\theoremstyle{definition}
\newcommand{\norm}[1]{\left\|{#1}\right\|}
\newcommand{\abs}[1]{\vert{#1}\rvert}
\newcommand{\relu}{\mathrm{ReLU}}
\newcommand{\real}{\mathbb{R}}
\newcommand{\R}{\mathbb{R}}
\newcommand{\Q}{\mathbb{Q}}
\newcommand{\vol}{\mathcal{L}^n}
\newcommand{\aeq}{\overset{\Delta}{=}}
\newcommand{\GL}{\mathrm{GL}}
\newcommand{\be}{\begin{equation}}
\newcommand{\ee}{\end{equation}}
\newlength{\fixboxwidth}
\begin{document}

\title{Improved  universal approximation with neural networks  studied via affine-invariant subspaces of \(L_2(\R^n)\)}
\author{S. Probst\footnote{Friedrich-Alexander Universit\"at Erlangen, Mathematics Department, Cauerstr. 11, 91058 Erlangen, Germany. Email: \href{mailto:samuel.probst@fau.de}{samuel.probst@fau.de}} \quad \  and \quad  C. Schneider\footnote{Friedrich-Alexander Universit\"at Erlangen, Applied Mathematics III, Cauerstr. 11, 91058 Erlangen, Germany. Email: \href{mailto:schneider@math.fau.de}{schneider@math.fau.de}}}
\maketitle

\begin{abstract}

{\small We show that there are no non-trivial closed subspaces of \(L_2(\R^n)\) that are invariant under invertible affine transformations\footnote{See the introduction for a precise definition.}. We apply this result to neural networks showing that any nonzero $L_2(\real)$ function is an adequate activation function in a one hidden layer neural network in order to  approximate every  function in $L_2(\real)$ with any desired accuracy. This generalizes the universal approximation properties of neural networks in $L_2(\real)$ related to Wiener's Tauberian Theorems. Our results extend to the spaces $L_p(\real)$ with  $p>1$. \\
\noindent{\em Key Words:} Neural networks, activation functions, Wiener's Tauberian Theorems, Lebesgue spaces, affine invariant subspaces, Universal approximation theorem.\\
{\em MSC2020 Math Subject Classifications:} Primary: 68T07, 41A30; Secondary: 46E30, 28A05.}
\end{abstract}

\section{Introduction and main results}\label{sect-2}

This paper contributes to the ongoing dialogue between classical approximation theory and neural network theory by providing a more rigorous mathematical foundation for understanding how neural networks approximate $L_2$ functions. The result aligns with Wiener's classical framework while introducing new implications for modern machine learning architectures.

One of the foundational results regarding    approximation theory for neural networks was the {\em Universal Approximation Theorem} of \cite{cybenko} and \cite{hornik}, which establishes that 
a feedforward neural network with a single hidden layer and a continuous, non-polynomial activation function can approximate any continuous function on a compact subset of $\real^n$, given sufficiently many neurons. Subsequent extensions of this result addressed the analogous problem for Lebesgue spaces with a finite exponent \cite{hornik-2} and locally integrable spaces \cite{ParkSandberg}. 
Building upon these approximation theories, Wiener's Tauberian Theorems provide another vital tool in understanding function approximation since they 
offer criteria for when a set of translates of a function can densely span a function space like $L_1(\real)$  or $L_2(\real)$. Specifically, Wiener's Tauberian Theorem states that for a function $f\in L_1(\real)$, the translates of $f$ span $L_1(\real)$ if and only if the Fourier transform of $f$ has no zeros. 
As was already pointed out in \cite{cybenko}, this implies that  the set of all one-hidden layer neural networks of the form 
 $
    \sum_{i=1}^N \lambda_i f(x-\theta_i)  \text{ with }  N \in \mathbb{N},~\lambda_i,\theta_i \in \R, 
$
is dense in $L_1(\real)$ if  the activation function   $f\in L_1(\real)$ has   Fourier transform $\widehat{f}(x)\neq 0$. 
This result can be extended  to $L_2(\real)$, if one assumes that the Fourier transform of $f$ has only zeros on a set of Lebesgue measure zero -- making use of  Wiener's Tauberian Theorem for $L_2(\real)$.  
As one observes,  all previous (universal) approximation results in $L_p$-spaces have in common that the choice of suitable activations functions is somehow restricted. Let us mention in this context that a very nice overview of the state of the art concerning universal approximation and suitable activation functions can be found in \cite[Table~3.1]{jesus}. 
The main result of this paper can be considered as a generalization of Wiener's Tauberian Theorem in $L_2(\real)$. We  show that one can drop the assumption on the Fourier-transform  completely and use any $f\in L_2(\real)\setminus \{0\}$   in order to approximate any function in $L_2(\real)$: 
 \begin{thm}[{\bf Neural network approximation in $L_2(\R)$}]\label{thm:main_result_NN}
The set   of all one-hidden layer neural networks with activation function
  \(f \in L_2(\R)\) with  \(f \neq 0\),  i.e., 
    \[
    \sum_{i=1}^N \lambda_i f(\alpha_i x-\theta_i) \quad \text{ with }\quad  N \in \mathbb{N},~\lambda_i \in \mathbb{R},~\theta_i \in \mathbb{R},~\alpha_i \in \R\setminus \{0\}, 
    \]
    is  dense in \(L_2(\R)\). 
\end{thm}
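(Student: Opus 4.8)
The plan is to reduce the statement to the abstract classification result quoted in the abstract. Let $V$ be the closed linear span in $\real$ -- more precisely, in $L_2(\real)$ -- of the family $\mathcal{G}=\{\,x\mapsto f(\alpha x-\theta)\ :\ \alpha\in\real\setminus\{0\},\ \theta\in\real\,\}$; by definition $V$ is a closed subspace of $L_2(\real)$, and the set of neural networks in the statement is exactly the (uncompleted) linear span $\operatorname{span}\mathcal{G}$, so the claim is equivalent to $V=L_2(\real)$. Since $f\neq 0$, taking $\alpha=1$ and $\theta=0$ shows $f\in V$, hence $V\neq\{0\}$. It therefore suffices to prove that $V$ is invariant under invertible affine transformations of $\real$, for then the classification result (no non-trivial closed affine-invariant subspace of $L_2(\real^n)$, applied with $n=1$) gives $V=L_2(\real)$.

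To check affine invariance, fix an invertible affine map $T\colon\real\to\real$, $T(x)=ax+b$ with $a\in\real\setminus\{0\}$, and let $U_T$ be the induced isometry on $L_2(\real)$, $(U_Tg)(x)=|a|^{1/2}\,g(ax+b)$; the normalizing constant plays no role in what follows, and it is immaterial whether the affine action is set up via $T$ or via $T^{-1}$, both being invertible affine. A single change of variables shows that for a generator $g(x)=f(\alpha x-\theta)\in\mathcal{G}$ one has $(U_Tg)(x)=|a|^{1/2}\,f(a\alpha\,x+\alpha b-\theta)=|a|^{1/2}\,f(\alpha' x-\theta')$ with $\alpha'=a\alpha\in\real\setminus\{0\}$ and $\theta'=\theta-\alpha b\in\real$, that is, $U_T$ sends each element of $\mathcal{G}$ to a scalar multiple of another element of $\mathcal{G}$. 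Hence $U_T$ maps $\operatorname{span}\mathcal{G}$ into itself, and by boundedness of $U_T$ its continuous extension maps $V=\overline{\operatorname{span}\mathcal{G}}$ into itself; letting $T$ range over all invertible affine maps of $\real$ (which form a group, so that in fact $U_TV=V$ for each $T$) shows that $V$ is affine-invariant, completing the deduction.

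I do not expect a substantive obstacle in this argument: all of the analytic difficulty is contained in the affine-invariance classification theorem, which we may assume here. The only points that need care are bookkeeping ones: matching the paper's precise convention for the affine action on $L_2(\real)$ (a group element acting on points versus acting on functions, with or without the Jacobian factor $|a|^{1/2}$) and observing that inserting or omitting such a nonzero scalar factor changes neither linear spans nor their closures. The promised extension to $L_p(\real)$ with $p>1$ should follow by the verbatim argument, with $U_T$ replaced by the corresponding isometry $(U_Tg)(x)=|a|^{1/p}g(ax+b)$ of $L_p(\real)$ and the $L_p$-analogue of the classification result used in place of the $L_2$ one.
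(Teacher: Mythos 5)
Your proposal is correct and follows essentially the same route as the paper: identify the (closure of the) set of networks as a nonzero closed affine-invariant subspace of \(L_2(\R)\) and invoke the classification theorem for such subspaces. The only difference is that you spell out the change-of-variables check of affine invariance (and the irrelevance of the Jacobian normalization for linear spans), which the paper leaves as an observation.
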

We will deduce the above theorem from a more general statement about affine-invariant subspaces of $L_2(\real^n)$. We say a subspace \(U \subseteq L_2(\R^n)\) is \emph{affine-invariant} if \(f \in U\) implies \(f \circ A \in U\) for any invertible affine map \(A: \R^n \to \R^n\) (i.e., $A$ is a function of the form \(x \mapsto Ax:=Mx+v\) for some invertible $n\times n$ matrix \(M\) ($\det M\neq 0$)  and vector \(v\in \mathbb{R}^n\)). In particular, we are able to show: 

\begin{thm}[{\bf Affine-invariant subspaces in $L_2(\real^n)$}] \label{thm:main_result}
    Let \(U \subseteq L_2(\R^n)\) be a closed affine-invariant subspace. Then either \(U = L_2(\R^n)\) or \(U = \{0\}\).
\end{thm}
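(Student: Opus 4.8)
The plan is to pass to the Fourier side and exploit the way affine maps act there. Recall that if $A x = M x + v$, then for $f \in L_2(\mathbb{R}^n)$ one has $\widehat{f \circ A}(\xi) = |\det M|^{-1} e^{2\pi i \langle M^{-\top} \xi, v\rangle} \widehat{f}(M^{-\top}\xi)$, so under Fourier transform an affine-invariant closed subspace $U$ becomes a closed subspace $\widehat{U}$ of $L_2(\mathbb{R}^n)$ that is invariant under (i) all invertible linear substitutions $g(\xi) \mapsto g(N\xi)$ and (ii) modulations $g(\xi) \mapsto e^{2\pi i \langle \xi, v\rangle} g(\xi)$ for all $v \in \mathbb{R}^n$. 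The key point is that modulation-invariance is extremely strong: a closed subspace of $L_2$ invariant under multiplication by all characters $e^{2\pi i \langle \cdot, v\rangle}$ is, by a standard argument, a \emph{band} — i.e., of the form $\{g : g = \mathbf{1}_E g\}$ for some measurable set $E \subseteq \mathbb{R}^n$ (one shows the orthogonal projection onto $\widehat{U}$ commutes with all these multiplication operators, hence with multiplication by every $L_\infty$ function, hence is itself multiplication by an indicator). So $\widehat{U} = \{g \in L_2 : \supp g \subseteq E\}$ up to null sets.

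Next I would use the linear-substitution invariance to pin down $E$. If $g$ is supported in $E$, then $g(N \cdot)$ is supported in $N^{-1}E$, and it must again lie in $\widehat U$, so $N^{-1} E \subseteq E$ up to a null set for every $N \in \GL(n,\mathbb{R})$; applying this to $N$ and $N^{-1}$ gives $N E = E$ modulo null sets for all $N$. So $E$ is a measurable set invariant (mod null sets) under the full linear group. Since $\GL(n,\mathbb{R})$ acts transitively on $\mathbb{R}^n \setminus \{0\}$, and the origin is a null set, a measurable set invariant under this action has measure zero or full measure: concretely, if $|E| > 0$ pick a Lebesgue density point $\xi_0 \neq 0$ of $E$, and for any other $\eta \neq 0$ choose $N$ with $N\xi_0 = \eta$; then $\eta$ is a density point of $NE = E$, so $E$ has full measure. (One can also phrase this via a $0$–$1$ law / ergodicity of the $\GL_n$-action on $\mathbb{R}^n$ with Lebesgue measure, or simply integrate: the function $\xi \mapsto$ density of $E$ at $\xi$ is $\GL_n$-invariant and $\{0,1\}$-valued a.e.) Hence $E$ is null — giving $\widehat U = \{0\}$, so $U = \{0\}$ — or $E$ is conull — giving $\widehat U = L_2$, so $U = L_2(\mathbb{R}^n)$.

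I expect the main obstacle to be the first step: rigorously establishing that a closed subspace of $L_2(\mathbb{R}^n)$ invariant under all modulations is a band. The clean route is operator-theoretic: let $P$ be the orthogonal projection onto $\widehat U$; invariance of $\widehat U$ under each unitary $M_v : g \mapsto e^{2\pi i \langle \cdot, v\rangle} g$ means $M_v \widehat U \subseteq \widehat U$, and since $M_v$ is unitary this forces $M_v P = P M_v$ for all $v$. The von Neumann algebra generated by $\{M_v\}$ is the algebra of all multiplication operators by $L_\infty$ functions (this is where a small amount of work is needed — approximating a general $L_\infty$ multiplier by trigonometric polynomials in the appropriate topology, or invoking that $\{e^{2\pi i\langle\cdot,v\rangle}\}$ generates $L_\infty$ as a von Neumann algebra), so $P$ commutes with all such multiplications, hence $P$ is itself multiplication by a function $p \in L_\infty$; $P = P^2 = P^*$ forces $p = \overline p = p^2$ a.e., i.e.\ $p = \mathbf 1_E$. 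Everything after that is the elementary geometry of $\GL_n$ orbits described above, and the translation back through the Fourier transform is routine since $\mathcal F$ is a unitary bijection of $L_2(\mathbb{R}^n)$. Finally, Theorem~\ref{thm:main_result_NN} follows by taking $n = 1$ and letting $U$ be the closed span of all $f(\alpha x - \theta)$: it is affine-invariant and nonzero, hence all of $L_2(\mathbb{R})$.
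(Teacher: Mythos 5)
Your proposal is correct, and its second half takes a genuinely different route from the paper. The first step is essentially shared: both arguments reduce a closed translation-invariant subspace to the form \(\mathcal{U}(E)=\{g:\widehat g|_E=0\}\) (your ``band'' \(\{h:\supp h\subseteq E\}\) is the same statement with \(E\) replaced by its complement); the paper simply cites this as Theorem~\ref{thm:wtt}, while you sketch the commutant/von Neumann-algebra proof of it. The divergence is in how the dichotomy on \(E\) is obtained. The paper applies the linear invariance to the set \(A\) where \(\widehat f\neq 0\) for a single \(f\in U\), and proves the separate measure-theoretic statement (Theorem~\ref{thm:measure_theory}) that \(\GL(\Q,n)\cdot A\aeq\R^n\) for \emph{any} positive-measure \(A\); this forces the use of the countable group \(\GL(\Q,n)\) so that countable subadditivity can close the argument, and hence requires the rational-approximation Lemma~\ref{lem:ugly} and the intersection estimate Lemma~\ref{lemma:intersection_volume_transitive}. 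You instead exploit the band structure more fully: the set \(E\) itself satisfies \(NE\aeq E\) for every \(N\in\GL(\R,n)\), and a single Lebesgue density point is transported to every nonzero point by transitivity of the full linear group; since each conclusion concerns one point and the one fixed set \(E\), no countability issue arises and the quantitative lemmas disappear. Two details you should still write out: (i) that a linear map sends density points of \(E\) to density points of \(NE\) --- balls go to ellipsoids, so one uses the sandwich \(N(B_{r/\norm{N}_{op}}(\xi_0))\subseteq B_r(N\xi_0)\subseteq N(B_{r\norm{N^{-1}}_{op}}(\xi_0))\), which costs only the harmless factor \((\norm{N}_{op}\norm{N^{-1}}_{op})^n\); and (ii) that the commutant of all modulations is exactly the \(L_\infty\) multiplication algebra, for which one typically multiplies trigonometric polynomials against a fixed strictly positive \(L_2\) weight and uses a density argument. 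Both are standard. What the paper's longer route buys is the stand-alone and independently interesting fact that the \emph{countable} group \(\GL(\Q,n)\) already spreads any positive-measure set over almost all of \(\R^n\), and an argument that stays elementary in the sense of avoiding operator-algebraic machinery.
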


We remark in this context that the similar problem of translation and dilation invariant subspaces 
has recently been investigated in \cite{aleksandrov}. 
In particular, the results from  \cite[Thm.~6.1]{aleksandrov} imply that when $n=1$, Theorem \ref{thm:main_result} (and therefore also Theorem \ref{thm:main_result_NN})  remain even true in    $L_p(\real)$ for arbitrary $p\in (1,\infty)$. However, our method of proof is completely different and more elementary compared to \cite{aleksandrov} and of interest on its own.

Finally, one observes that Theorem \ref{thm:main_result_NN} is only formulated in dimension $n = 1$, while Theorem \ref{thm:main_result} holds in general dimensions. This is because shallow neural networks as
considered in Theorem \ref{thm:main_result_NN} never belong to $L_2(\mathbb{R}^n)$ in dimension $n > 1$; see   \cite[Thm.~6.1]{vanNuland} in this context.

There are 
several other significant contributions in this field of neural networks and approximation theory which deserve to be mentioned. 
In the last years, many directions have been explored 
for both shallow and deep neural networks. For example, in \cite{voigtlaender-2}  negative results  were studied, in \cite{voigtlaender} the universal approximation theorem for complex-valued neural networks was investigated, whereas \cite{CapelOcariz} deals the problem of approximating any function in a variable Lebesgue space using one-hidden layer neural networks. 
For neural networks with ReLU activation function, there are various recent papers on similar topics such as approximation for Besov spaces \cite{suz19}, \cite{siegel}, regression \cite{SH17},  optimization problems \cite{GS09}, and estimates for the errors obtained in the approximation \cite{pet99}, \cite{yar17}. Furthermore, for more regular activation functions there are also several recent articles such as   \cite{barron}, \cite{LTY20}, \cite{OK19} and many others, which focus on deep neural networks.

\section{Affine-invariant subspaces in \(L_2(\R^n)\)}

Our goal in this section is to prove Theorem \ref{thm:main_result}, which states that the only closed subspaces of \(L_2(\R^n)\) that are affine-invariant are \(L_2(\R^n)\) itself and the trivial subspace \(\{0\}\). 
In what follows we reduce  Theorem \ref{thm:main_result} to a purely measure-theoretic statement (Theorem \ref{thm:measure_theory}) using the following generalization of Wiener's Tauberian theorem for \(L_2(\mathbb{R}^n)\). 
\begin{thm}\label{thm:wtt}
    Let \(U \subseteq L_2(\R^n)\) be a closed translation-invariant subspace. Then there exists \(E \subseteq \mathbb{R}^n\) measurable such that 
    \[
    U = \mathcal{U}(E) := \left\{f \in L_2(\R^n): \  \widehat{f}|_{E} = 0\right\}.
    \]
\end{thm}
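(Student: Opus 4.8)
\emph{Proof strategy.} The plan is to transport the problem to the Fourier side, where translations become multiplications by unimodular characters, and then to classify the closed subspaces that are stable under all such multiplications. Write \(\mathcal F\) for the Fourier transform on \(L_2(\R^n)\), which is a unitary isomorphism by Plancherel, and recall that it intertwines the translation \(f\mapsto f(\cdot-a)\) with multiplication by the character \(e_a(\xi):=e^{-2\pi i a\cdot\xi}\). Hence \(U\) is a closed translation-invariant subspace if and only if \(V:=\mathcal F(U)\) is a closed subspace with \(e_a\,V\subseteq V\) for all \(a\in\R^n\). The theorem will follow once we show that every such \(V\) has the form \(\mathbf 1_F\cdot L_2(\R^n)=\{g\in L_2(\R^n):g=0\text{ a.e.\ on }\R^n\setminus F\}\) for some measurable \(F\subseteq\R^n\), since then \(U=\mathcal U(E)\) with \(E:=\R^n\setminus F\).

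First I would enlarge the multiplier class. Given \(g\in V\) and \(\psi\in L_1(\R^n)\), the vector-valued (Bochner) integral \(\int_{\R^n}\psi(a)\,(e_a g)\,da\) converges in \(L_2(\R^n)\) because \(\|e_a g\|_2=\|g\|_2\), it lies in \(V\) because \(V\) is closed and convex, and by Fubini it equals \(\widehat\psi\cdot g\). So \(\widehat\psi\, g\in V\) for all \(\psi\in L_1(\R^n)\). The Fourier algebra \(\mathcal F(L_1(\R^n))\) is a self-adjoint subalgebra of \(C_0(\R^n)\) which separates points and vanishes nowhere (it contains the Schwartz functions), hence is uniformly dense in \(C_0(\R^n)\) by Stone--Weierstrass; combining this with \(\|\varphi g-\varphi_k g\|_2\le\|\varphi-\varphi_k\|_\infty\|g\|_2\) and the closedness of \(V\) gives \(\varphi g\in V\) for every \(\varphi\in C_0(\R^n)\). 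The same inner-product computation shows \(V^\perp\) is likewise stable under multiplication by \(C_0(\R^n)\), so the orthogonal projection \(P\) onto \(V\) commutes with every multiplication operator \(M_\varphi\), \(\varphi\in C_0(\R^n)\).

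The remaining point, which is the crux, is to identify such a \(P\) as multiplication by an indicator function. Morally this is just the maximality of the multiplication algebra as a maximal abelian subalgebra of \(B(L_2(\R^n))\), together with the fact that a multiplication operator that is an orthogonal projection must be \(M_{\mathbf 1_F}\); to keep things elementary in the infinite-measure setting (where no constant function lies in \(L_2\)), I would fix a strictly positive weight \(w\in L_2(\R^n)\), e.g.\ a Gaussian, put \(v:=Pw\in V\) and \(m:=v/w\) (a measurable function). Evaluating the identity \(P(\varphi w)=\varphi\,Pw=m\,(\varphi w)\) on \(\varphi\in C_c(\R^n)\) and using in turn that \(P\) is a contraction, self-adjoint and idempotent yields \(|m|\le1\), then \(m=\overline m\), then \(m^2=m\) almost everywhere; that is, \(m=\mathbf 1_F\) a.e.\ for \(F:=\{m=1\}\). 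Since \(\{\varphi w:\varphi\in C_c(\R^n)\}\) is dense in \(L_2(\R^n)\) and \(P\) agrees with \(M_{\mathbf 1_F}\) there, \(P=M_{\mathbf 1_F}\), whence \(V=\operatorname{Ran}P=\mathbf 1_F\,L_2(\R^n)\). Undoing the Fourier transform gives \(U=\mathcal U(\R^n\setminus F)\).

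I expect the only genuinely delicate step to be this last identification: the standard one-line argument (apply the operator to the constant function \(1\)) is unavailable on \(\R^n\), and replacing \(1\) by a strictly positive \(L_2\) weight forces a short but careful approximation argument to recover both the boundedness of the symbol \(m\) and the density of \(\{\varphi w\}\) in \(L_2(\R^n)\). Everything preceding it — the Plancherel reduction, the \(L_1\)-averaging trick, and the Stone--Weierstrass density — is routine harmonic analysis.
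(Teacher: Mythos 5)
Your argument is correct, but note that the paper does not actually prove this theorem---its ``proof'' is a one-line citation to the reference \cite{ludwig2015}---so any comparison is between your self-contained argument and the literature. What you give is the classical proof of Wiener's $L_2$ Tauberian theorem via the spectral/multiplication-operator picture: Plancherel turns translation invariance into invariance under the characters $e_a$, the $L_1$-averaging trick upgrades this to invariance under multiplication by $\mathcal F(L_1(\R^n))$ and hence (Stone--Weierstrass plus closedness) by all of $C_0(\R^n)$, the same computation applied to $V^\perp$ shows the orthogonal projection $P$ onto $V$ commutes with every $M_\varphi$, and the strictly positive weight $w$ substitutes for the unavailable constant function to exhibit $P$ as $M_{\mathbf 1_F}$. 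All of these steps are sound; the density of $\{\varphi w:\varphi\in C_c(\R^n)\}$ in $L_2(\R^n)$ follows since $u\perp\varphi w$ for all $\varphi$ forces $w\bar u=0$ a.e., hence $u=0$ because $w>0$. The only place where the order of deductions matters is the idempotency step: you cannot evaluate $P^2(\varphi w)=P(m\varphi w)$ directly as $m^2\varphi w$, since $m\varphi w$ is not of the form $\psi w$ with $\psi\in C_c$. Either first establish $|m|\le 1$ and the density, conclude $P=M_m$ on all of $L_2$, and only then read off $m^2=m$ from $P^2=P$; or, more cleanly, use $\langle P(\varphi w),\psi w\rangle=\langle P(\varphi w),P(\psi w)\rangle$ to get $\int(|m|^2-m)\varphi\bar\psi w^2\,dx=0$ for all $\varphi,\psi\in C_c(\R^n)$, which yields $|m|^2=m$ and hence $m=\mathbf 1_F$ a.e.\ in one stroke. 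With that ordering made explicit your proof is complete, and it has the virtue of making the paper's key input self-contained rather than outsourced.
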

\begin{proof}
    {See \cite{ludwig2015}.}
\end{proof}

Here \(U\) being translation-invariant means that \(f(x) \in U\) implies \(f(x - \theta) \in U\) for all $\theta\in \real^n$ and \(\widehat{f}\) denotes the Fourier transform of \(f\). Note that in particular \(U\) is translation-invariant if it is affine-invariant.

Let \(\mathcal{M}\) denote the set of (Lebesgue) measurable subsets of \(\mathbb{R}^n\) and \(\vol \colon \mathcal{M} \to [0, \infty]\) the $n$-dimensional Lebesgue measure. We now embark on the proof of Theorem \ref{thm:measure_theory}. The proof relies on the Lebesgue density theorem, which we briefly state for the reader's convenience. 

\begin{thm}[{\bf Lebesgue density theorem}] \label{thm:lebesgue_density}
    Let \(E \in \mathcal{M}\). Then for almost all \(x \in E\)
    \[
    \lim_{r \searrow 0} \frac{\mathcal{L}^n(B_r(x) \cap E)}{\vol(B_r(x))} = 1, 
    \]
    where $B_r(x)$ denotes the open ball in $\real^n$ around $x$ with radius $r>0$.  
\end{thm}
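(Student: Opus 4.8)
The plan is to obtain Theorem~\ref{thm:lebesgue_density} as a corollary of the Lebesgue differentiation theorem applied to the indicator function $g = \mathbf 1_E$, which is locally integrable. Granting that for every $g \in L^1_{\mathrm{loc}}(\R^n)$ one has
\[
\lim_{r \searrow 0} \frac{1}{\vol(B_r(x))} \int_{B_r(x)} g \, d\vol = g(x) \qquad \text{for a.e.\ } x \in \R^n,
\]
the choice $g = \mathbf 1_E$ gives $\frac{\vol(B_r(x) \cap E)}{\vol(B_r(x))} \to \mathbf 1_E(x)$ for a.e.\ $x$; intersecting the full-measure set on which this holds with $E$ and using that $\mathbf 1_E \equiv 1$ there yields the theorem. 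If $\vol(E) = \infty$ there is nothing extra to do once the differentiation theorem is known on all of $\R^n$; alternatively one applies the result on each ball $B_k(0)$ and takes a countable union. So the substance lies in the differentiation theorem itself.

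To prove that, I would use the Hardy--Littlewood maximal operator $M g(x) = \sup_{r > 0} \frac{1}{\vol(B_r(x))} \int_{B_r(x)} |g| \, d\vol$. The first step is the weak-type $(1,1)$ bound $\vol(\{ M g > \lambda \}) \le \frac{C_n}{\lambda} \norm{g}_{L^1(\R^n)}$ for $g \in L^1(\R^n)$ and $\lambda > 0$: for a compact $K \subseteq \{Mg > \lambda\}$, cover $K$ by balls on which the average of $|g|$ exceeds $\lambda$, extract a finite subcover, and apply the Vitali ($5r$-)covering lemma to obtain a disjoint subfamily $B_1, \dots, B_m$ whose fivefold dilates still cover $K$; then $\vol(K) \le 5^n \sum_j \vol(B_j) \le \frac{5^n}{\lambda} \sum_j \int_{B_j} |g| \le \frac{5^n}{\lambda} \norm{g}_{L^1}$, and inner regularity finishes it. The second step is a density argument: given $\varepsilon > 0$ take $h$ continuous with compact support and $\norm{g - h}_{L^1} < \varepsilon$; since the averages of $h$ converge to $h$ everywhere (by uniform continuity), the set where $\limsup_{r \searrow 0} \big| \frac{1}{\vol(B_r(x))} \int_{B_r(x)} g\, d\vol - g(x)\big| > \alpha$ is contained in $\{M(g-h) > \alpha/2\} \cup \{|g - h| > \alpha/2\}$, which by the weak bound and Chebyshev's inequality has measure at most $\frac{C_n + 1}{\alpha/2}\,\varepsilon$. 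Letting $\varepsilon \searrow 0$ shows this limsup is $\le \alpha$ a.e.\ for every $\alpha > 0$, hence $0$ a.e.; a localization (work on $B_k(0)$, union over $k$) extends this from $L^1$ to $L^1_{\mathrm{loc}}$.

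Putting $g = \mathbf 1_E$ into the conclusion of the differentiation theorem then proves Theorem~\ref{thm:lebesgue_density}. The only genuinely technical ingredient is the geometric covering lemma behind the weak $(1,1)$ inequality; everything else is bookkeeping. Since the statement is entirely classical and is invoked here only as an off-the-shelf tool, the most economical option in the paper is in fact to cite it from a standard real-analysis reference rather than reproduce this argument, but the sketch above is the route I would follow if a self-contained proof were wanted.
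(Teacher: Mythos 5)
Your proposal is correct, and it is essentially the same approach as the paper takes: the paper simply cites \cite[Theorem 7.7]{rudin_analysis}, whose proof is precisely the standard route you sketch (Lebesgue differentiation via the Hardy--Littlewood maximal function, the Vitali covering lemma for the weak $(1,1)$ bound, and a density argument), applied to $\mathbf{1}_E$. Your closing remark that citing a standard reference is the most economical option is exactly what the authors do.
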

\begin{proof}
    See \cite[Theorem 7.7]{rudin_analysis}.
\end{proof}

To deduce Theorem \ref{thm:measure_theory} from this we first need two lemmas. 

\begin{lem} \label{lemma:intersection_volume_transitive}
    Let \(k \geq 3\) and \(M_1, ..., M_k \in \mathcal{M}\). If \(M_2, ..., M_{k-1}\) have finite measure then 
    \[
    \vol(M_1 \cap M_k) \geq \sum_{i=1}^{k-1} \vol(M_i \cap M_{i+1}) - \sum_{i=2}^{k-1} \vol(M_i).
    \]
\end{lem}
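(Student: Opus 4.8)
The plan is to induct on $k$, with the base case $k=3$ amounting to the submodularity (strong subadditivity) of Lebesgue measure. For $k=3$ I would set $A := M_1 \cap M_2$ and $B := M_2 \cap M_3$. Since $M_2$ has finite measure and $A, B \subseteq M_2$, both $A$ and $B$ have finite measure, so the inclusion–exclusion identity
\[
\vol(A) + \vol(B) = \vol(A \cup B) + \vol(A \cap B)
\]
holds with all four terms finite. Now $A \cup B = M_2 \cap (M_1 \cup M_3) \subseteq M_2$ yields $\vol(A \cup B) \leq \vol(M_2)$, while $A \cap B = M_1 \cap M_2 \cap M_3 \subseteq M_1 \cap M_3$ yields $\vol(M_1 \cap M_3) \geq \vol(A \cap B)$. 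Solving the identity for $\vol(A \cap B)$ and combining these two bounds gives
\[
\vol(M_1 \cap M_3) \geq \vol(M_1 \cap M_2) + \vol(M_2 \cap M_3) - \vol(M_2),
\]
which is the claim for $k=3$.

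For the inductive step, assume $k \geq 4$ and that the statement holds for $k-1$. I would first apply the induction hypothesis to $M_1, \ldots, M_{k-1}$ — legitimate since the ``interior'' members $M_2, \ldots, M_{k-2}$ have finite measure — to get
\[
\vol(M_1 \cap M_{k-1}) \geq \sum_{i=1}^{k-2} \vol(M_i \cap M_{i+1}) - \sum_{i=2}^{k-2} \vol(M_i),
\]
and then apply the already-established case $k=3$ to the triple $(M_1, M_{k-1}, M_k)$ — legitimate since $M_{k-1}$ has finite measure — to get
\[
\vol(M_1 \cap M_k) \geq \vol(M_1 \cap M_{k-1}) + \vol(M_{k-1} \cap M_k) - \vol(M_{k-1}).
\]
Substituting the first display into the second (the term $\vol(M_1 \cap M_{k-1})$ occurs with a positive sign, so the inequality is preserved) and noticing that the new summand $\vol(M_{k-1} \cap M_k)$ completes the first sum to $\sum_{i=1}^{k-1}\vol(M_i \cap M_{i+1})$ while $\vol(M_{k-1})$ completes the second to $\sum_{i=2}^{k-1}\vol(M_i)$, one obtains precisely the asserted inequality for $k$.

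I do not expect a genuine obstacle in this argument; the one point needing attention is purely bookkeeping: every subtraction of measures appearing above must be well defined, i.e. never of the indeterminate form $\infty-\infty$. This is exactly why finiteness is imposed on $M_2, \ldots, M_{k-1}$ and not on $M_1$ or $M_k$: the two endpoint sets enter only through the quantities $\vol(M_1 \cap M_2)$, $\vol(M_{k-1} \cap M_k)$ (both finite, being intersections with finite-measure sets) and the left-hand side $\vol(M_1 \cap M_k)$, which may harmlessly be $+\infty$.
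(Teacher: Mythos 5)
Your proposal is correct and follows essentially the same route as the paper: induction on \(k\) with the inductive step obtained by combining the hypothesis for \(M_1,\dots,M_{k-1}\) with the \(k=3\) case applied to \((M_1,M_{k-1},M_k)\). Your base case via the identity \(\vol(A)+\vol(B)=\vol(A\cup B)+\vol(A\cap B)\) for \(A=M_1\cap M_2\), \(B=M_2\cap M_3\) is just a tidier packaging of the paper's explicit decomposition into disjoint pieces of \(M_2\), and your remark about finiteness of \(M_2,\dots,M_{k-1}\) being exactly what prevents \(\infty-\infty\) matches the paper's use of that hypothesis.
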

\begin{proof}
   The  proof follows by induction on \(k\):  To start, let  \(k = 3\).   Then we have
    \begin{align*}
        \vol(M_1 \cap M_2) &= \vol(M_1 \cap M_2 \cap M_3) + \vol(M_1 \cap M_2 \cap M_3^\complement), \\
        \vol(M_2 \cap M_3) &= \vol(M_1 \cap M_2 \cap M_3) + \vol(M_1^\complement \cap M_2 \cap M_3), \\
        \vol(M_2) &\geq \vol(M_1 \cap M_2 \cap M_3) + \vol(M_1^\complement \cap M_2 \cap M_3) + \vol(M_1 \cap M_2 \cap M_3^\complement), 
    \end{align*}
    and because \(M_1 \cap M_2 \cap M_3\) has finite measure we see that 
    \begin{align*}
           \vol(M_2) \geq \vol(M_1 \cap M_2) + \vol(M_2 \cap M_3) - \vol(M_1 \cap M_2 \cap M_3).
    \end{align*}
    Therefore,  because \(M_2\) has finite measure, we obtain 
    \[
    \vol(M_1 \cap M_3) \geq \vol(M_1 \cap M_2 \cap M_3) \geq \vol(M_1 \cap M_2) + \vol(M_2 \cap M_3) - \vol(M_2), 
    \]
which shows the  claim for $k=3$. \\
    Now let \(k > 3\) and suppose the lemma is true for all \(m < k\). Let \(M_1, ..., M_k\) be measurable and let \(M_2, ..., M_{k-1}\) have finite measure. Using the inductive hypothesis on \(M_1, ..., M_{k-1}\) and \(M_1, M_{k-1}, M_k\) we obtain
    \begin{align*}
        \vol(M_1 \cap M_{k-1}) &\geq \sum_{i=1}^{k-2} \vol(M_i \cap M_{i+1}) - \sum_{i=2}^{k-2} \vol(M_i), \\
        \vol(M_1 \cap M_k) &\geq \vol(M_1 \cap M_{k-1}) + \vol(M_{k-1} \cap M_k) - \vol(M_{k-1}). 
    \end{align*}
    This yields
    \begin{align*}
            \vol(M_1 \cap M_k) &\geq \vol(M_1 \cap M_{k-1}) + \vol(M_{k-1} \cap M_k) - \vol(M_{k-1}) \\
            &\geq \sum_{i=1}^{k-2} \vol(M_i \cap M_{i+1}) - \sum_{i=2}^{k-2} \vol(M_i) + \vol(M_{k-1} \cap M_k) - \vol(M_{k-1})\\
            &= \sum_{i=1}^{k-1} \vol(M_i \cap M_{i+1}) - \sum_{i=2}^{k-1} \vol(M_i), 
    \end{align*}
which completes the proof. 
\end{proof}

For a field \(K\) let \(\GL(K,n)\) denote the set of invertible \(n\times n\) matrices with entries in \(K\). Moreover let \(\norm{\cdot}_{op}\) denote the operator norm induced by the 2-norm \(\norm{\cdot}_2\).
\begin{lem} \label{lem:ugly}
    Let \(\epsilon > 0\), \(a,b \in \R^n \setminus \{0\}\) and \(r_a, r_b > 0\) such that \(\frac{\norm{b}_2}{\norm{a}_2} = \frac{{r_b}}{{r_a}}\). Then there exists \(M \in \GL(\mathbb{Q},n)\) such that
    \begin{enumerate}[(i)]
        \item \(\left|\vol(M(B_{r_a}(a))) - \vol(B_{r_b}(b))\right| < \epsilon \cdot \vol(B_{r_b}(b))\),
        \item \(\left|\vol(M(B_{r_a}(a)) \cap B_{r_b}(b)) - \vol(B_{r_b}(b))\right| < 2\epsilon \cdot \vol(B_{r_b}(b))\).
    \end{enumerate}
\end{lem}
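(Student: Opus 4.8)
The plan is to obtain $M$ as a small perturbation of a genuine similarity transformation that maps $B_{r_a}(a)$ \emph{exactly} onto $B_{r_b}(b)$. Put $c := \norm{b}_2/\norm{a}_2 = r_b/r_a$ and choose an orthogonal $n\times n$ matrix $R$ with $R(a/\norm{a}_2) = b/\norm{b}_2$ (possible since any unit vector of $\R^n$ is carried to any other by some orthogonal map). Set $M_0 := cR$. Then $M_0 \in \GL(\R,n)$, it rescales all Euclidean distances by the factor $c$, and $M_0(a) = cR(a) = c\norm{a}_2\,(b/\norm{b}_2) = b$; since $cr_a = r_b$ this means $M_0(B_{r_a}(a)) = B_{r_b}(b)$. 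Thus on the level of real matrices the claim is an exact identity, and everything reduces to a perturbation argument.

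First I would replace $M_0$ by a nearby rational matrix: $\GL(\R,n)$ is open in $\R^{n\times n}$ (it is the preimage of $\R\setminus\{0\}$ under the continuous map $\det$) and $\Q^{n\times n}$ is dense, so for every $\delta>0$ there is $M \in \GL(\Q,n)$ with $\norm{M-M_0}_{op} < \delta$. The heart of the argument is then to show that for $\delta$ small, $M(B_{r_a}(a))$ is squeezed between two concentric balls about $b$, namely $B_{r_b-\eta'}(b) \subseteq M(B_{r_a}(a)) \subseteq B_{r_b+\eta}(b)$, with $\eta = \eta(\delta)\to 0$ and $\eta' = \eta'(\delta)\to 0$ as $\delta\searrow 0$. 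The outer inclusion is direct: for $x\in B_{r_a}(a)$ one has $\norm{Mx-M_0x}_2 \le \delta\norm{x}_2 \le \delta(\norm{a}_2+r_a)$ while $M_0x\in B_{r_b}(b)$, so one may take $\eta := \delta(\norm{a}_2+r_a)$. For the inner inclusion I would work with the inverse: for $\delta$ small enough that $\norm{M^{-1}}_{op}\le 2\norm{M_0^{-1}}_{op} = 2/c$ (continuity of matrix inversion at $M_0$), the identity $M^{-1}-M_0^{-1} = M^{-1}(M_0-M)M_0^{-1}$ gives $\norm{M^{-1}-M_0^{-1}}_{op}\le 2\delta/c^2$; then for $y\in B_{r_b-\eta'}(b)$ one puts $x := M^{-1}y$ and, using $M_0^{-1}(B_{r_b}(b)) = B_{r_a}(a)$, estimates $\norm{x-a}_2 \le \norm{M^{-1}-M_0^{-1}}_{op}\norm{y}_2 + \norm{M_0^{-1}}_{op}\norm{y-b}_2 < r_a$ provided $\eta'$ is chosen as a suitable constant multiple of $\delta$; hence $y = Mx \in M(B_{r_a}(a))$.

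Finally I would read off (i) and (ii) from this sandwich, using that $r\mapsto \vol(B_r(b)) = r^n\,\vol(B_1(0))$ is continuous. For (i): $\vol(M(B_{r_a}(a)))$ lies between $\vol(B_{r_b-\eta'}(b))$ and $\vol(B_{r_b+\eta}(b))$, both of which tend to $\vol(B_{r_b}(b))$ as $\delta\searrow 0$, so $|\vol(M(B_{r_a}(a))) - \vol(B_{r_b}(b))|$ can be made $< \epsilon\,\vol(B_{r_b}(b))$. For (ii): the inner inclusion gives $B_{r_b-\eta'}(b) \subseteq M(B_{r_a}(a))\cap B_{r_b}(b) \subseteq B_{r_b}(b)$, hence $0 \le \vol(B_{r_b}(b)) - \vol(M(B_{r_a}(a))\cap B_{r_b}(b)) \le \vol(B_{r_b}(b)) - \vol(B_{r_b-\eta'}(b)) \to 0$, which is $< 2\epsilon\,\vol(B_{r_b}(b))$ (in fact $< \epsilon\,\vol(B_{r_b}(b))$) for $\delta$ small. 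Choosing $\delta$ small enough for both requirements completes the proof. I expect the only genuinely fiddly point to be the perturbation bound behind the inner inclusion — keeping track of $\norm{M^{-1}}_{op}$ and of the $\delta$-dependence of $\eta'$ — which is presumably why the lemma carries its working name; the rest is a soft continuity argument.
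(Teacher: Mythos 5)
Your proposal is correct, and its skeleton coincides with the paper's: build the exact similarity $M_0=cR$ (the paper calls it $N=\frac{\norm{b}_2}{\norm{a}_2}BA^T$, constructed via Gram--Schmidt) carrying $B_{r_a}(a)$ onto $B_{r_b}(b)$, then replace it by a nearby matrix in $\GL(\Q,n)$ using density of $\Q$ and openness of the invertible matrices. Where you diverge is in how the volume estimates are closed out. The paper never proves your inner inclusion: it gets (i) for free from the identity $\vol(M(B_{r_a}(a)))=\abs{\det M}\cdot\vol(B_{r_a}(a))$ together with continuity of the determinant at $N$, and then derives (ii) from (i) plus only the \emph{outer} inclusion $M(B_{r_a}(a))\subseteq B_{r_b+\delta_2}(b)$ (the missing mass of $M(B_{r_a}(a))$ outside $B_{r_b}(b)$ is confined to a thin annulus, whence the $2\epsilon$). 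You instead establish the two-sided sandwich $B_{r_b-\eta'}(b)\subseteq M(B_{r_a}(a))\subseteq B_{r_b+\eta}(b)$, paying for the inner inclusion with the perturbation bound $\norm{M^{-1}-M_0^{-1}}_{op}\le 2\delta/c^2$ and the estimate $\norm{M^{-1}y-a}_2<r_a$; your accounting there is sound (you correctly use $M_0^{-1}b=a$ and choose $\eta'$ as a constant multiple of $\delta$). What each buys: the paper's determinant shortcut is shorter and avoids inverting $M$ altogether; your sandwich is more geometric, makes both (i) and (ii) immediate from monotonicity and continuity of $r\mapsto r^n\vol(B_1(0))$, and in fact yields (ii) with the sharper constant $\epsilon$ in place of $2\epsilon$ --- though that improvement is irrelevant downstream, since Theorem \ref{thm:measure_theory} only needs the stated bounds with $\epsilon=0.1$.
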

\begin{proof}
We start by showing there exists a matrix \(N \in \GL(\R, n)\) such that \(N(B_{r_a}(a)) = B_{r_b}(b)\): Let \(a_1 = \frac{a}{\norm{a}_2}\) and \(b_1 = \frac{b}{\norm{b}_2}\). Then using the Gram-Schmidt process we can find orthonormal bases \((a_1, ..., a_n)\) and \((b_1, ..., b_n)\) containing \(a_1\) and \(b_1\) respectively. Now let \(A\) be the orthogonal matrix with the \(a_i\) as columns and let \(B\) be the orthogonal matrix with the \(b_i\) as columns. Then \(BA^Ta_1 = b_1\) and thus \(\frac{\norm{b}_2}{\norm{a}_2}BA^Ta = b\). Hence putting \(N := \frac{\norm{b}_2}{\norm{a}_2}BA^T\) we found an invertible matrix that maps \(a\) to \(b\). Moreover,  the condition \(\frac{\norm{b}_2}{\norm{a}_2} = \frac{{r_b}}{{r_a}}\) together with the fact that \(N\) only rotates and scales and thus maps spheres to spheres   ensures that \(N(B_{r_a}(a)) = B_{r_b}(b)\). \\
   \noindent Next we can choose a matrix that is sufficiently close to \(N\) as our matrix \(M\): 
  Because the determinant is a continuous map there exists \(\delta_1 > 0\) such that 
    \begin{equation} \label{eq:det_continuous}
        \norm{M-N}_{op} < \delta_1 \implies \abs{\det(M) - \det(N)} < \frac{\epsilon \cdot \vol(B_{r_b}(b))}{\vol(B_{r_a}(a))}
    \end{equation}
    for all \(M \in \GL(\R,n)\). Furthermore there exists \(\delta_2 > 0\) such that 
    \begin{equation} \label{eq:ballborder}
    \frac{\vol(B_{r_b + \delta_2}(b) \setminus B_{r_b}(b))}{\vol(B_{r_b}(b))} = \frac{(r_b + \delta_2)^n - r_b^n}{r_b^n} < \epsilon.
    \end{equation}
    Finally choose \(R>0\) such that \(B_{r_a}(a) \subseteq B_R(0)\) and define \(\delta := \min\{\delta_1, \frac{\delta_2}{R}\}\). Then, because \(\mathbb{Q}\) is dense in \(\mathbb{R}\) and because the set of invertible matrices is open, there exists \(M \in \GL(\Q, n)\) such that \(\norm{M-N}_{op} < \delta\). Now we have 
    \begin{equation*} 
    \abs{\vol(M(B_{r_a}(a))) - \vol(B_{r_b}(b))} = \big|\abs{\det(M)} - \abs{\det(N)}\big| \cdot \vol(B_{r_a}(a)) \leq \epsilon \cdot \vol(B_{r_b}(b))
    \end{equation*}
    by (\ref{eq:det_continuous}), which shows that \(M\) satisfies (i).\\
    Next let \(x \in B_{r_a}(a)\). Then
    \begin{equation*}
        \norm{Mx-b}_2 \leq \norm{Mx-Nx}_2 + \norm{Nx-b}_2 \leq \norm{M-N}_{op} \cdot \norm{x}_2 + r_b \leq \delta_2 \cdot \frac{\norm{x}_2}{R} + r_b \leq \delta_2 + r_b,
    \end{equation*}
    which shows that \(M(B_{r_a}(a)) \subseteq B_{r_b + \delta_2}(b).\) \\
    Thus \(M(B_{r_a}(a))\) is the disjoint union of \(M(B_{r_a}(a)) \cap B_{r_b}(b)\) and \(M(B_{r_a}(a)) \cap B_{r_b + \delta_2}(b) \setminus B_{r_b}(b)\) and therefore using (\ref{eq:ballborder}) we have
    \begin{align*}
        \vol(M(B_{r_a}(a)) \cap B_{r_b}(b)) &= \vol(M(B_{r_a}(a))) - \vol(M(B_{r_a}(a)) \cap B_{r_b + \delta_2}(b) \setminus B_{r_b}(b)) \\
        &\geq \vol(M(B_{r_a}(a))) - \vol(B_{r_b + \delta_2}(b) \setminus B_{r_b}(b)) \\
        &\geq \vol(M(B_{r_a}(a))) - \epsilon \cdot \vol(B_{r_b}(b)).
    \end{align*}
    Finally using the fact that \(M\) satisfies (i) we obtain
    \begin{align*}
        \left|\vol(M(B_{r_a}(a)) \cap B_{r_b}(b)) - \vol(B_{r_b}(b))\right| &= \vol(B_{r_b}(b)) - \vol(M(B_{r_a}(a)) \cap B_{r_b}(b)) \\
        &\leq \vol(B_{r_b}(b)) - \vol(M(B_{r_a}(a))) + \epsilon \cdot \vol(B_{r_b}(b)) \\
        &\leq \abs{\vol(B_{r_b}(b)) - \vol(M(B_{r_a}(a)))} + \epsilon \cdot \vol(B_{r_b}(b)) \\
        &\leq 2\epsilon\vol(B_{r_b}(b)),
    \end{align*}
    showing that \(M\) also satisfies (ii).
\end{proof}

We are now ready to prove Theorem \ref{thm:measure_theory}. For \(M,N \in \mathcal{M}\) we write \(M \aeq N\) if \(M\) and \(N\) only differ on a set of measure zero, that is if \(\vol((M \setminus N) \cup (N \setminus M))=0\). Moreover for \(A \subseteq \R^n\) we define \(\GL(\mathbb{Q},n) \cdot A := \{Ma : M \in \GL(\Q,n), a \in A \}\).

\begin{thm} \label{thm:measure_theory}
    Let \(A \in \mathcal{M}\) with positive measure. Then 
    \[
    \GL(\mathbb{Q},n) \cdot A \aeq \mathbb{R}^n.
    \]
\end{thm}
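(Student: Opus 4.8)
The plan is to argue by contradiction, using the Lebesgue density theorem (Theorem~\ref{thm:lebesgue_density}) to produce a small ball in which two disjoint sets are each forced to have almost full measure. First I would observe that \(\GL(\Q,n)\) is countable (a matrix in it is determined by \(n^2\) rational entries, and \(\Q^{n^2}\) is countable), and that for each \(M\in\GL(\Q,n)\) the set \(M\cdot A\) is measurable, since an invertible linear map sends measurable sets to measurable sets and satisfies \(\vol(M\cdot S)=\abs{\det M}\,\vol(S)\). Hence \(\GL(\Q,n)\cdot A=\bigcup_{M\in\GL(\Q,n)}M\cdot A\) is measurable, and so is its complement \(C:=\R^n\setminus(\GL(\Q,n)\cdot A)\). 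Assume for contradiction that \(\vol(C)>0\).

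By Theorem~\ref{thm:lebesgue_density}, almost every point of \(A\) is a density point of \(A\) and almost every point of \(C\) is a density point of \(C\); since \(\{0\}\) is a null set I may pick density points \(a\in A\setminus\{0\}\) and \(b\in C\setminus\{0\}\). Fix \(\epsilon\in(0,\tfrac16)\). For a parameter \(r_a>0\) set \(r_b:=r_a\,\norm{b}_2/\norm{a}_2\), so that \(r_b\searrow 0\) as \(r_a\searrow 0\) and the scaling hypothesis \(\norm{b}_2/\norm{a}_2=r_b/r_a\) of Lemma~\ref{lem:ugly} holds automatically. Taking \(r_a\) sufficiently small, the density property yields simultaneously
\[
\vol(B_{r_a}(a)\cap A)>(1-\epsilon)\,\vol(B_{r_a}(a)),\qquad \vol(B_{r_b}(b)\cap C)>(1-\epsilon)\,\vol(B_{r_b}(b)).
\]
Now I would apply Lemma~\ref{lem:ugly} to \(\epsilon,a,b,r_a,r_b\) to obtain a matrix \(M\in\GL(\Q,n)\) satisfying (i) and (ii).

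The heart of the argument is then a volume count inside \(B_{r_b}(b)\). Writing \(V:=\vol(B_{r_b}(b))\), the identity \(\vol(M\cdot S)=\abs{\det M}\,\vol(S)\) together with the density estimate for \(A\) and part (i) gives \(\vol\big(M(B_{r_a}(a)\cap A)\big)>(1-\epsilon)\vol\big(M(B_{r_a}(a))\big)>(1-\epsilon)^2V\); and parts (i)--(ii) bound \(\vol\big(M(B_{r_a}(a))\setminus B_{r_b}(b)\big)\) above by \(3\epsilon V\). Combining these, \(\vol\big(M(B_{r_a}(a)\cap A)\cap B_{r_b}(b)\big)>(1-5\epsilon)V\). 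On the other hand \(M(B_{r_a}(a)\cap A)\subseteq M\cdot A\subseteq\GL(\Q,n)\cdot A\), so this set is disjoint from \(C\); thus \(M(B_{r_a}(a)\cap A)\cap B_{r_b}(b)\) and \(C\cap B_{r_b}(b)\) would be disjoint subsets of \(B_{r_b}(b)\) of total measure at least \((1-5\epsilon)V+(1-\epsilon)V=(2-6\epsilon)V>V\), a contradiction. Therefore \(\vol(C)=0\), that is \(\GL(\Q,n)\cdot A\aeq\R^n\). I expect the only genuine work to be the \(\epsilon\)-bookkeeping in this last step --- chaining the two conclusions of Lemma~\ref{lem:ugly}, the change-of-variables identity, and the two density inequalities into a clean numerical contradiction; everything delicate (moving the density of \(A\) near \(a\) onto a neighbourhood of \(b\) via a \emph{rational} invertible matrix) has already been packaged into Lemma~\ref{lem:ugly}, and Lemma~\ref{lemma:intersection_volume_transitive} does not even seem to be needed for this particular statement.
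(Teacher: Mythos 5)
Your proof is correct, and it follows the paper's skeleton exactly up to the last step: contradiction, Lebesgue density points \(a\in A\setminus\{0\}\), \(b\) in the complement, radii chosen with \(r_b\norm{a}_2=r_a\norm{b}_2\), and Lemma~\ref{lem:ugly} to produce the rational matrix \(M\). Where you diverge is the final volume count. The paper feeds the chain \(B,\ B_{r_b}(b),\ M(B_{r_a}(a)),\ M(A)\) into Lemma~\ref{lemma:intersection_volume_transitive} (with \(\epsilon=0.1\)) to bound \(\vol(B\cap M(A))\) below by \((1-4\epsilon-\epsilon^2)\vol(B_{r_b}(b))>0\), contradicting \(B\cap M(A)=\emptyset\). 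You instead observe that \(M(B_{r_a}(a)\cap A)\cap B_{r_b}(b)\) and \(C\cap B_{r_b}(b)\) are disjoint subsets of \(B_{r_b}(b)\) whose measures sum to more than \(\vol(B_{r_b}(b))\) once \(\epsilon<1/6\); your \(\epsilon\)-bookkeeping (the \((1-\epsilon)^2V\) lower bound via change of variables, the \(3\epsilon V\) bound on \(\vol(M(B_{r_a}(a))\setminus B_{r_b}(b))\) from (i) and (ii), and the resulting \((2-6\epsilon)V>V\)) all checks out. This pigeonhole version is slightly more self-contained, as it makes Lemma~\ref{lemma:intersection_volume_transitive} unnecessary for this theorem, and you also supply two small points the paper leaves implicit: the measurability of \(\GL(\Q,n)\cdot A\) via countability of \(\GL(\Q,n)\), and the need to pick the density points away from the origin so that Lemma~\ref{lem:ugly} applies. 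The trade-off is only aesthetic: the paper's chain lemma packages the inclusion--exclusion once and for all, while your argument redoes it by hand in a way that is arguably easier to follow.
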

\begin{proof}
    Let \(\epsilon := 0.1\) and \(B := (\GL(\mathbb{Q},n) \cdot A)^\complement\). Suppose \(B\) had positive measure. Then by the Lebesgue density theorem there would exist (nonzero) points of density \(a \in A, b \in B\) for \(A\) and \(B\). Hence there would exist \(R > 0\) such that 
    \begin{align*}
        \vol(A \cap B_r(a)) &\geq (1 - \epsilon) \cdot \vol(B_r(a)), \\
        \vol(B \cap B_r(b)) &\geq (1 - \epsilon) \cdot \vol(B_r(b))
    \end{align*}
    for all \(r < R\). Now choose real numbers \(r_a\) and \(r_b\) satisfying \(0 < r_a, r_b < R\) and \(r_b \cdot \norm{a}_2 = r_a \cdot \norm{b}_2\). Then by Lemma \ref{lem:ugly} there exists \(M \in \GL(\mathbb{Q},n)\) such that
    \begin{align*}
        \abs{\vol(M(B_{r_a}(a))) - \vol(B_{r_b}(b))} &\leq \epsilon \cdot \vol(B_{r_b}(b)),\\
        \left|\vol(M(B_{r_a}(a)) \cap B_{r_b}(b)) - \vol(B_{r_b}(b))\right| &\leq 2\epsilon \cdot \vol(B_{r_b}(b)).
    \end{align*}
    In particular, we have
    \begin{align*}
        \vol(M(B_{r_a}(a)) \cap B_{r_b}(b)) &\geq (1 - 2\epsilon)\vol(B_{r_b}(b)), \\
        \vol(M(B_{r_a}(a))) &\leq (1 + \epsilon)\vol(B_{r_b}(b)).
    \end{align*}
    Thus using Lemma \ref{lemma:intersection_volume_transitive} we obtain
    \begin{align*}
        \vol(B \cap M(A)) &\geq \vol(B \cap B_{r_b}(b)) + \vol(B_{r_b}(b) \cap M(B_{r_a}(a))) + \vol(M(B_{r_a}(a)) \cap M(A)) \\
        & ~~~ - \vol(B_{r_b}(b)) - \vol(M(B_{r_a}(a))) \\
        &\geq (1-\epsilon)\vol(B_{r_b}(b)) + (1 - 2\epsilon)\vol(B_{r_b}(b)) + \det(M) (1 - \epsilon) \vol(B_{r_a}(a)) \\
        & ~~~ - \vol(B_{r_b}(b)) - \det(M)\vol(B_{r_a}(a)) \\
        &= (1 - 3\epsilon)\vol(B_{r_b}(b)) - \epsilon\cdot\vol(M(B_{r_a}(a)))  \\
        &\geq (1-3\epsilon - \epsilon(1 + \epsilon))\vol(B_{r_b}(b))  = (1-4\epsilon - \epsilon^2)\vol(B_{r_b}(b))  > 0
    \end{align*}
    contradicting the definition of \(B\) as the complement of \(\GL(\mathbb{Q},n) \cdot A\).
\end{proof}

Having established Theorem \ref{thm:measure_theory} we only need one more small lemma in order to be able to proof Theorem \ref{thm:main_result}. In what follows we use the following convention: We say that \(f \in L_2(\real^n)\) has no zeros in \(E \in \mathcal{M}\) if for some (and thus every) representative \(\varphi_f\) of \(f\) it holds that 
$
\vol(\varphi_f^{-1}(\{0\}) \cap E) = 0.
$
\begin{lem}\label{lemma:crisp} 
    Let \(E \in \mathcal{M}\), \(f \in \mathcal{U}(E)=\left\{g \in L_2(\R^n): \  \widehat{g}|_{E} = 0\right\}\), and \(M \in \mathcal{M}\) such that \(\widehat{f}\) has no zeros in \(M\). Then \(\vol(M\cap E) = 0\).
\end{lem}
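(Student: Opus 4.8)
The plan is to unwind the two hypotheses into statements about a single fixed representative of \(\widehat f\) and then combine them by an elementary set-inclusion argument. First I would fix a measurable representative \(\varphi\) of \(\widehat f\) and put \(Z := \{x \in \R^n : \varphi(x) \neq 0\}\). The hypothesis \(f \in \mathcal{U}(E)\) means exactly that \(\widehat f\) vanishes almost everywhere on \(E\), i.e.\ \(\vol(Z \cap E) = 0\). The hypothesis that \(\widehat f\) has no zeros in \(M\), read through the convention introduced just before the lemma (applied to \(\widehat f\) in place of \(f\)), gives \(\vol(\varphi^{-1}(\{0\}) \cap M) = \vol(Z^\complement \cap M) = 0\).

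Next I would use the inclusion \(M \cap E \subseteq (Z \cap E) \cup (Z^\complement \cap M)\): any point of \(M \cap E\) either lies in \(Z\), hence in \(Z \cap E\), or lies in \(Z^\complement\), hence in \(Z^\complement \cap M\). Applying monotonicity and subadditivity of \(\vol\) to this inclusion together with the two vanishing statements from the previous step yields
\[
\vol(M \cap E) \leq \vol(Z \cap E) + \vol(Z^\complement \cap M) = 0,
\]
which is the assertion.

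There is essentially no obstacle here: the only point needing (minor) care is that both ``\(\widehat f|_E = 0\)'' and ``\(\widehat f\) has no zeros in \(M\)'' must be understood as representative-independent statements, which is precisely what the conventions fixed before the lemma guarantee. Once the definitions are spelled out, the argument is a one-line bookkeeping computation, and the lemma will be used in the proof of Theorem~\ref{thm:main_result} to pass between the operator-theoretic description of translation-invariant subspaces via Theorem~\ref{thm:wtt} and the purely measure-theoretic content of Theorem~\ref{thm:measure_theory}.
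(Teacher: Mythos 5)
Your proof is correct and is essentially the same argument as the paper's: both rest on splitting \(M\cap E\) according to whether a representative \(\varphi\) of \(\widehat f\) vanishes, with \(f\in\mathcal U(E)\) killing the part where \(\varphi\neq 0\) and the no-zeros hypothesis killing the part where \(\varphi=0\). The paper merely phrases this as a two-line contradiction rather than your direct subadditivity computation.
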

\begin{proof}
    Because \(f \in \mathcal{U}(E)\) we have \(\widehat{f}|_{M \cap E} = 0\). If \(M \cap E\) had nonzero measure this would contradict \(\widehat{f}\) having no zeros in \(M\).
\end{proof}

With all this preparation, we can now finally characterize  affine-invariant subspaces in $L_2(\real^n)$. 

\begin{proof}{\em (of Theorem \ref{thm:main_result})}: 
    Suppose \(U \neq \{0\}\). Then there exists some \(f \in U\) with \(f \neq 0\) and thus \(\widehat{f} \neq 0\) by Plancherel's Theorem (cf. \cite[Theorem 1.6.1]{rudin_fourier_analysis}) and there exists \(A \in \mathcal{M}\) with positive measure such that \(\widehat{f}\) has no zeros in \(A\) (take for example the complement of the zero set of some representative of \(\widehat{f}\)). Then by Theorem \ref{thm:measure_theory} we have \(\GL(\mathbb{Q},n)\cdot A \aeq \mathbb{R}^n\). Moreover,  for any \(M \in \GL(\Q,n)\) the Fourier transform of \(f \circ M\) has no zeros in \(M^T(A)\). This is because we have
    \[
        \widehat{f \circ M}(x) = \frac{1}{|\det(M)|} \cdot \widehat{f}(M^{-T}x) \quad  \text{ a.e.},
    \]
    which can be verified directly using the definition of the Fourier transform.

    Thus as \(U\) is affine-invariant we have \(f \circ M \in U\) and Lemma \ref{lemma:crisp} yields \(\vol(M^T(A) \cap E) = 0\) for any \(M \in \GL(\mathbb{Q},n)\), where \(E \in \mathcal{M}\) is such that \(\mathcal{U}(E) = U\) (see Theorem \ref{thm:wtt}). Then
    \[
    \vol(E) = \vol(E \cap (\GL(\mathbb{Q},n)\cdot A)) = \vol\left(\bigcup_{M \in \GL(\mathbb{Q},n)} (E \cap M^T(A))\right) \leq \sum_{M \in \GL(\mathbb{Q},n)} \vol(E \cap M^T(A)) = 0,
    \]
    which implies
    \begin{align*}
        U = \mathcal{U}(E) = \{g \in L_2(\real^n) :\  \widehat{g}|_E = 0\} = L_2(\real^n), 
    \end{align*}
    because all functions are equivalent to the zero function when restricted to a set of measure zero.
\end{proof}

\section{Application to neural networks}

We now use the result from Theorem \ref{thm:main_result} when $n=1$ and investigate when one-hidden layer neural networks are dense in \(L_2(\R)\).

\begin{proof}{\em (of Theorem \ref{thm:main_result_NN})}: 
  Note that the neural networks considered in Theorem \ref{thm:main_result_NN} form an affine-invariant subspace of \(L_2(\R)\). Thus, if for $f\in L_2(\real)\setminus \{0\}$ we let   \[N_f :=\left\{g\in L_2(\real): \ 
 g(x)=\sum_{i=1}^N \lambda_i f(\alpha_i x-\theta_i), \    N \in \mathbb{N},~\lambda_i, \theta_i \in \mathbb{R},~\alpha_i \in \R\setminus \{0\} 
\right\}\subseteq L_2(\R)
\] 
denote the set of these neural networks, it follows that  \(\overline{N_f}\) is a closed affine-invariant subspace and Theorem \ref{thm:main_result} implies  \(\overline{N_f} = L_2(\R)\) for nonzero  \(f \in L_2(\R)\).  
\end{proof}

\begin{rem}
 Most activation functions used in practice are not in \(L_2(\R)\) and thus Theorem \ref{thm:main_result_NN} does not apply to them directly. However, by Theorem \ref{thm:main_result} it suffices if there is some nonzero \(L_2\) function that can be approximated (using a one-hidden layer neural network with such an activation function) to deduce all \(L_2\) functions can be. 
Consider for example the ReLU activation function \(\mathrm{ReLU}(x) = \max\{0,x\}\). Then the hat function can be represented as 
\begin{align*}
    (\relu(x)-\relu(x+1))-(\relu(x+1)-\relu(x+2)) = \left.\begin{cases}
        0, & \text{ \(x \leq -2\)} \\
        x+2, & \text{ \(-2 \leq x \leq -1\)} \\
        -x, & \text{ \(-1 \leq x \leq 0\)} \\
        0, & \text{ \(0 \leq x\)}
    \end{cases}\right\}  \in L_2(\R).
\end{align*}
Hence there exists a nonzero function in \(N_{\relu}\) belonging to $L_2(\real)$ and from the structure of $N_{\relu}$ and Theorem \ref{thm:main_result} we deduce  that \(\overline{N_{\relu}} = L_2(\R)\).

\end{rem}

\medskip 


{\small 

\bibliographystyle{plain}
\bibliography{refs.bib}

}

\end{document}